\newtheorem{thm}{Theorem}
\newtheorem{prop}[thm]{Proposition}
\begin{document}

\title[Borderline
Bourgain-Brezis-Sobolev Inequalities]{Applications of
Bourgain-Brezis Inequalities to  Fluid Mechanics and Magnetism }

\author{Sagun Chanillo}
\address{Department of Mathematics, Rutgers, the State University of New Jersey, NJ 08854}
\email{chanillo@math.rutgers.edu}

\author{Jean Van Schaftingen}
\address{Institut de Recherche en Math\'ematique et en Physique\\ Universit\'e catholique de Louvain\\ Chemin du Cyclotron 2 bte L7.01.01\\ 1348 Louvain-la-Neuve\\ Belgium}
\email{Jean.VanSchaftingen@uclouvain.be}

\author{Po-Lam Yung}
\address{Department of Mathematics\\ the Chinese University of Hong Kong\\
Shatin\\ Hong Kong} \email{plyung@math.cuhk.edu.hk}

\thanks{S.C. was partially supported by NSF grant
DMS 1201474. J.V.S. was partially supported by the Fonds de la
Recherche Scientifique-FNRS. P.-L.Y. was  partially supported by a
direct grant for research from the Chinese University of Hong Kong
(4053120). We thank Haim Brezis for several comments that improved
the paper}

\begin{abstract} As a consequence of inequalities due to Bourgain-Brezis, we obtain local in time well-posedness for the two dimensional Navier--Stokes equation with velocity bounded in spacetime and initial vorticity in bounded variation. We also obtain spacetime estimates for the magnetic field vector through improved Strichartz inequalities.
\end{abstract}

\maketitle

\vspace{0.1 in}


\section{Incompressible Navier--Stokes flow}

Let $\mathbf{v}(x,t)\in \mathbb{R}^2$ be the velocity and $p (x, t)$
be the pressure of a fluid of viscosity \(\nu > 0\) at position $x
\in \mathbb{R}^2$ and time $t \in \mathbb{R}$, governed by the
incompressible two-dimensional Navier--Stokes equation:
\begin{equation}\label{eq2}
\left\{
\begin{aligned}
\mathbf{v}_t+(\mathbf{v}\cdot \nabla)\mathbf{v}&=\nu\Delta {\bf
    v}-\nabla p,\\
    \nabla\cdot \mathbf{v}&=0,
\end{aligned}
\right.
\end{equation}
When the viscosity coefficient $\nu$ degenerates to zero,
\eqref{eq2} becomes the Euler equation. In two spatial dimensions, the vorticity of the flow is a scalar, defined by
$$
\omega= \partial_{x_1} v_2 - \partial_{x_2} v_1
$$
where we wrote $\mathbf{v}=(v_1,v_2)$.
In the sequel, when we consider the Navier-Stokes equation, without loss of generality we set the viscosity coefficient $\nu=1$.

The vorticity associated to the incompressible Navier-Stokes flow in
two dimensions propagates according to the equation
\begin{equation}\label{eq9}
\omega_t -\Delta\omega=-\nabla\cdot(\mathbf{v}\omega).
\end{equation}
This follows from \eqref{eq2} by taking the curl of both sides.
We express the velocity $\mathbf{v}$ in the Navier-Stokes equation in terms of the vorticity
 through the Biot-Savart relation
\begin{equation} \label{eq:BS2d}
\mathbf{v} = (-\Delta)^{-1} (\partial_{x_2} \omega, -\partial_{x_1} \omega).
\end{equation}
This follows formally by differentiating $\omega= \partial_{x_1} v_2 - \partial_{x_2} v_1$, and using that $\nabla \cdot \mathbf{v} = 0$.

Our theorem states:
\begin{thm} \label{thm:fluid} Consider the two-dimensional
vorticity equation \eqref{eq9} and an initial vorticity $\omega_0
\in W^{1, 1} (\mathbb{R}^2)$ at time $t=0$. If
$$
\|\omega_0\|_{W^{1,1}(\mathbb{R}^2)}\leq A_0,
$$
then there exists a unique solution to the vorticity equation
\eqref{eq9} for all time $t\leq t_0=C/A_0^2$,  such that
             $$\sup_{t\leq t_0}\|\omega(\cdot,
             t)\|_{W^{1,1}(\mathbb{R}^2)}\leq cA_0.$$
Moreover, the solution $\omega$ depends continuously on the initial data $\omega_0$, in the sense that if $\omega_0^{(i)}$ is a sequence of initial data converging in $W^{1,1}(\mathbb{R}^2)$ to $\omega_0$, then the corresponding solutions $\omega^{(i)}$ to the vorticity equation \eqref{eq9} satisfies
$$
\sup_{t \leq t_0} \|\omega^{(i)}(\cdot,t)-\omega(\cdot,t)\|_{W^{1,1}(\mathbb{R}^2)} \to 0
$$
as $i \to \infty$.

Finally, the velocity vector $\mathbf{v}$ defined by the Biot-Savart relation (\ref{eq:BS2d}) solves the 2-dimensional incompressible Navier-Stokes (\ref{eq2}), and satisfies
            $$\sup_{t\leq t_0}\|{\mathbf
            {v}(\cdot,t)}\|_{L^\infty(\mathbb{R}^2)}+\sup_{t\leq t_0}\|\nabla{\mathbf
            v}(\cdot,t)\|_{L^2(\mathbb{R}^2)}\leq cA_0.$$
\end{thm}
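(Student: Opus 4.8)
The plan is to solve the vorticity equation \eqref{eq9} through its mild (Duhamel) formulation and to run a Banach fixed point argument in the space $X_{t_0}=C([0,t_0];W^{1,1}(\mathbb{R}^2))$, closing the estimates on a ball of radius comparable to $A_0$. Writing $e^{t\Delta}$ for the heat semigroup, a solution should satisfy
\begin{equation*}
\omega(t)=e^{t\Delta}\omega_0-\int_0^t e^{(t-s)\Delta}\,\nabla\cdot\bigl(\mathbf{v}(s)\,\omega(s)\bigr)\,ds,
\end{equation*}
where $\mathbf{v}(s)=\nabla^{\perp}(-\Delta)^{-1}\omega(s)$ is recovered from $\omega(s)$ through the Biot--Savart law \eqref{eq:BS2d}. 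Denoting the right-hand side by $\Phi(\omega)$, I seek a fixed point of $\Phi$.

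The estimates rest on three ingredients. First, the heat semigroup is an $L^1$ contraction commuting with derivatives, so $\|e^{t\Delta}f\|_{W^{1,1}}\le\|f\|_{W^{1,1}}$, and it smooths: $\|\nabla e^{\tau\Delta}g\|_{L^1}\lesssim \tau^{-1/2}\|g\|_{L^1}$. Second, since $\nabla\mathbf{v}$ is a zeroth-order Calder\'on--Zygmund transform of $\omega$ and $W^{1,1}(\mathbb{R}^2)\hookrightarrow L^2(\mathbb{R}^2)$ by the critical Sobolev embedding, one has $\|\nabla\mathbf{v}\|_{L^2}\lesssim\|\omega\|_{L^2}\lesssim\|\omega\|_{W^{1,1}}$. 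Third---and this is the decisive input and the main obstacle---a Bourgain--Brezis inequality gives the borderline bound $\|\mathbf{v}\|_{L^\infty}\lesssim\|\omega\|_{W^{1,1}}$ for the divergence-free field $\mathbf{v}$; this is exactly the endpoint that the scaling-critical embedding $W^{2,1}(\mathbb{R}^2)\hookrightarrow L^\infty$, false for general functions, fails to provide, and it is the divergence-free structure that rescues it. Combining these, the product $F=\mathbf{v}\,\omega$ obeys
\begin{equation*}
\|F\|_{W^{1,1}}\lesssim \|\mathbf{v}\|_{L^\infty}\|\omega\|_{W^{1,1}}+\|\nabla\mathbf{v}\|_{L^2}\|\omega\|_{L^2}\lesssim\|\omega\|_{W^{1,1}}^2 .
\end{equation*}

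With this in hand the nonlinear term is controlled by distributing the gradient so that only half a derivative falls on the semigroup, $\|e^{(t-s)\Delta}\nabla\cdot F\|_{W^{1,1}}\lesssim \bigl(1+(t-s)^{-1/2}\bigr)\|F\|_{W^{1,1}}$, whose singularity is integrable in $s$. Hence, on a ball of radius $2A_0$ in $X_{t_0}$,
\begin{equation*}
\sup_{t\le t_0}\|\Phi(\omega)(t)\|_{W^{1,1}}\le A_0+C\,(t_0+t_0^{1/2})\,(2A_0)^2,
\end{equation*}
so the choice $t_0=C'/A_0^2$, for which the binding condition $t_0^{1/2}A_0\lesssim 1$ holds, keeps $\Phi$ self-mapping. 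The same bilinear estimate applied to the difference $\mathbf{v}^{(1)}\omega^{(1)}-\mathbf{v}^{(2)}\omega^{(2)}=\mathbf{v}^{(1)}(\omega^{(1)}-\omega^{(2)})+(\mathbf{v}^{(1)}-\mathbf{v}^{(2)})\omega^{(2)}$, together with the linearity of Biot--Savart, makes $\Phi$ a contraction. This yields existence, uniqueness, and---since the contraction constant and $t_0$ depend only on $A_0$---the continuous dependence on $\omega_0$ claimed in the theorem.

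Finally I would record the velocity estimates and verify the equation. The bound $\sup_{t\le t_0}(\|\mathbf{v}\|_{L^\infty}+\|\nabla\mathbf{v}\|_{L^2})\le cA_0$ is immediate from the Bourgain--Brezis inequality, Calder\'on--Zygmund theory and Sobolev embedding, together with $\sup_{t\le t_0}\|\omega\|_{W^{1,1}}\le cA_0$. That the Biot--Savart field solves \eqref{eq2} is the standard reconstruction argument: $\mathbf{v}$ is divergence-free with $\operatorname{curl}\mathbf{v}=\omega$, and since \eqref{eq9} is the curl of \eqref{eq2}, the quantity $\mathbf{v}_t+(\mathbf{v}\cdot\nabla)\mathbf{v}-\Delta\mathbf{v}$ is curl-free, hence a gradient, which defines $-\nabla p$. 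The crux of the whole argument is the single borderline estimate $\|\mathbf{v}\|_{L^\infty}\lesssim\|\omega\|_{W^{1,1}}$; everything else is the soft machinery of semigroup smoothing and fixed points.
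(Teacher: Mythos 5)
Your proposal is correct and follows essentially the same route as the paper: the mild (Duhamel) formulation, a Banach fixed point argument on a ball of radius $\sim A_0$ in $L^\infty_t W^{1,1}_x$ with $t_0 \sim A_0^{-2}$, the product split $\|\nabla(\mathbf{v}\omega)\|_{L^1} \lesssim \|\nabla\mathbf{v}\|_{L^2}\|\omega\|_{L^2} + \|\mathbf{v}\|_{L^\infty}\|\nabla\omega\|_{L^1}$ together with the heat-kernel smoothing factor $(t-s)^{-1/2}$, and the Bourgain--Brezis bound $\|\mathbf{v}\|_{L^\infty} \lesssim \|\nabla\omega\|_{L^1}$ as the decisive borderline input. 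The only cosmetic deviation is that you obtain $\|\nabla\mathbf{v}\|_{L^2} \lesssim \|\omega\|_{L^2}$ from Calder\'on--Zygmund theory plus the embedding $W^{1,1}(\mathbb{R}^2)\hookrightarrow L^2(\mathbb{R}^2)$, whereas the paper reads both velocity bounds off its Proposition~\ref{prop}\eqref{prop_b}; the two are interchangeable here.
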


Via the Gagliardo-Nirenberg inequality we note that we can conclude
from our theorem that
$$\sup_{0\leq t\leq t_0}\|\omega(\cdot,t)\|_{L^p(\mathbb{R}^2)}\leq C, \quad 1\leq p\leq 2.$$
In particular this is enough to apply Theorem II of Kato
\cite{Kato1994} to express the velocity vector in the Navier-Stokes
equation \eqref{eq2} in terms of the vorticity via the Biot-Savart
relation displayed above.

 In  \citelist{\cite{GigaMiyakawaOsada1988}\cite{Kato1994}},
it was proved that under the hypothesis that the initial vorticity
is a measure, there is a global solution that is well-posed to the
vorticity and Navier--Stokes equation; see also an alternative
approach in Ben-Artzi \cite{MR1308857}, and a stronger uniqueness
result in Brezis \cite{MR1308858}. The velocity constructed then
satisfies the estimate \cite{Kato1994}*{(0.5)}
\begin{equation}\label{eq13}
\|\mathbf{v}(\cdot,t)\|_{L^\infty(\mathbb{R}^2)}\leq
Ct^{-{\frac{1}{2}}},\ t\to 0.
\end{equation}
In contrast, in Theorem~\ref{thm:fluid} we have ${\bf
v}\in L^\infty_tL^\infty_x$, $x\in\mathbb{R}^2$, though we
are assuming that the initial vorticity has bounded variation, that is, its gradient
is a measure.

The estimate \eqref{eq13} is
indeed sharp as can be seen by the famous example of the \emph{Lamb--Oseen vortex} \cite{Oseen}, which consists of an initial vorticity
$\omega_0=\alpha_0\delta_0$, a Dirac mass at the origin of
$\mathbb{R}^2$ with strength $\alpha_0$. The constant $\alpha_0$ is
called the total circulation of the vortex. A unique solution to the
vorticity equation \eqref{eq9}
can be obtained by setting
         $$\omega(x,t)= {\frac{\alpha_0}{4\pi
         t}}e^{-{\frac{|x|^2}{4t}}},\ \quad {\bf
         v}(x,t)={\frac{\alpha_0}{2\pi}}{\frac{(-x_2,x_1)}{|x|^2}}\Big(1-e^{-{\frac{|x|^2}{4t}}}\Big).$$
It can be seen from the identities above that,
$$\|\omega(\cdot,t)\|_{W^{1,1}(\mathbb{R}^2)}\sim\|{\bf
v}(\cdot,t)\|_{L^\infty(\mathbb{R}^2)}\sim ct^{-{\frac{1}{2}}}, \
t\to 0.$$ Hence the assumption that the initial vorticity is a
measure cannot yield an estimate like in Theorem~\ref{thm:fluid}.
Thus to get uniform in time, $L^\infty$ space bounds all the way to
$t=0$ we need a stronger hypothesis and one such is vorticity in
BV (bounded variation).

It is also helpful to further compare our result with that of Kato
\cite{Kato1994} who establishes in (0.4) of his paper, that given
the initial vorticity is a measure, one has for the vorticity at
further time,
           $$||\nabla\omega(\cdot,t)||_{L^q(\mathbb{R}^2)}\leq
           ct^{\frac{1}{q}-\frac{3}{2}},\quad 1<q\leq \infty.$$
In contrast we obtain uniform in time bounds for $q=1$, as opposed
to singular bounds for $q>1$ when $t\to 0$.

It is an open question whether there is a global version of Theorem
1 of our paper.
\\[10pt]

In order to prove Theorem~\ref{thm:fluid}, we rely on a basic
proposition that follows from the work of Bourgain and Brezis
\citelist{\cite{BourgainBrezis2004}\cite{BourgainBrezis2007}}. A
part of this proposition also holds in three dimension. Recall that if
 $\mathbf{v}(x,t)\in \mathbb{R}^3$ be the velocity of a fluid at a point $x \in \mathbb{R}^3$ at time $t$, then the vorticity of $\mathbf{v}$ is defined by
$$
  \boldsymbol{\omega}=\nabla\times \mathbf{v}.
$$
Under the assumption that the flow is incompressible, the
Biot-Savart relation reads
\begin{equation}\label{eq:vtoomega2}
\mathbf{v} = (-\Delta)^{-1} (\nabla \times \boldsymbol{\omega}).
\end{equation}

\begin{prop} \label{prop}
\begin{enumerate}[(a)]
\item \label{prop_a} Consider the velocity $\mathbf{v}$ in 3 spatial dimensions. Assume that $\mathbf{v}$ satisfies the Biot-Savart relation \eqref{eq:vtoomega2}. Then at any fixed time $t$,
$$\|\mathbf{v}(\cdot,t)\|_{L^3(\mathbb{R}^3)}+\|\nabla\mathbf{v}(\cdot,t)\|_{L^{3/2}(\mathbb{R}^3)}\leq
                 C\|\nabla\times \boldsymbol{\omega}(\cdot,t)\|_{L^1(\mathbb{R}^3)}$$
where $C$ is a constant independent of $t$, $\mathbf{v}$ and $\boldsymbol{\omega}$.
\item \label{prop_b} Consider the velocity $\mathbf{v}$  in 2 spatial dimensions. Assume that $\mathbf{v}$ satisfies the Biot-Savart relation  (\ref{eq:BS2d}). Then  at any fixed time $t$,
       $$\|\mathbf{v}(\cdot,t)\|_{L^\infty(\mathbb{R}^2)}+\|\nabla \mathbf{v}(\cdot,t)\|_{L^2(\mathbb{R}^2)}\leq
       C \|\nabla \omega(\cdot,t)\|_{L^1(\mathbb{R}^2)}.$$
where $C$ is a constant independent of $t$, $\mathbf{v}$ and $\omega$.
\end{enumerate}
\end{prop}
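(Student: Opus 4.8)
The plan is to prove both parts of the proposition as consequences of the endpoint Bourgain--Brezis inequalities, which state that if $\mathbf{f} \in L^1(\mathbb{R}^n; \mathbb{R}^n)$ is divergence-free (in the sense of distributions) for $n \geq 2$, then $(-\Delta)^{-1} \mathbf{f}$ gains the full two derivatives of elliptic regularity measured in the critical Sobolev space. Concretely, the relevant statements are that for divergence-free $\mathbf{f} \in L^1$, one has the estimate $\|(-\Delta)^{-1} \mathbf{f}\|_{\dot{W}^{2, n/(n-1)}} \leq C \|\mathbf{f}\|_{L^1}$, together with the Sobolev embedding that controls $\|\nabla (-\Delta)^{-1}\mathbf{f}\|$ in the appropriate Lebesgue norm. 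The key point is that a naive application of Calder\'on--Zygmund theory fails at the $L^1$ endpoint, and it is precisely the divergence-free structure that rescues the estimate.

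\medskip

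For part (\ref{prop_a}), I would first observe that the relevant source term is $\mathbf{f} = \nabla \times \boldsymbol{\omega}$, which is automatically divergence-free since $\nabla \cdot (\nabla \times \boldsymbol{\omega}) = 0$ as a pointwise identity. Thus the Bourgain--Brezis inequality applies directly to $\mathbf{f} \in L^1(\mathbb{R}^3; \mathbb{R}^3)$. From $\mathbf{v} = (-\Delta)^{-1} \mathbf{f}$ we read off that $\nabla \mathbf{v} = \nabla(-\Delta)^{-1} \mathbf{f}$, and the endpoint inequality with $n = 3$ yields $\nabla \mathbf{v} \in \dot{W}^{1, 3/2}$, hence in particular $\|\nabla \mathbf{v}\|_{L^{3/2}(\mathbb{R}^3)} \leq C \|\mathbf{f}\|_{L^1(\mathbb{R}^3)}$. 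The bound $\|\mathbf{v}\|_{L^3(\mathbb{R}^3)} \leq C\|\mathbf{f}\|_{L^1}$ then follows from the Sobolev embedding $\dot{W}^{1, 3/2}(\mathbb{R}^3) \hookrightarrow L^3(\mathbb{R}^3)$ applied to $\mathbf{v}$. One must be slightly careful that the critical exponents line up: with $n = 3$ the Sobolev conjugate of $3/2$ is exactly $3$, so the two estimates are consistent, and the constant $C$ depends only on the dimension and not on $t$.

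\medskip

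For part (\ref{prop_b}), the strategy is identical in spirit but one dimension lower, where the critical exponent degenerates to the $L^\infty$ endpoint. From the two-dimensional Biot--Savart relation \eqref{eq:BS2d} I would write $\mathbf{v} = (-\Delta)^{-1} \mathbf{f}$ with $\mathbf{f} = (\partial_{x_2}\omega, -\partial_{x_1}\omega)$, and note that this $\mathbf{f}$ is again divergence-free since $\nabla \cdot \mathbf{f} = \partial_{x_1}\partial_{x_2}\omega - \partial_{x_2}\partial_{x_1}\omega = 0$, while $\|\mathbf{f}\|_{L^1(\mathbb{R}^2)} = \|\nabla \omega\|_{L^1(\mathbb{R}^2)}$ up to a harmless constant. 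Applying the Bourgain--Brezis inequality with $n = 2$ gives $\nabla \mathbf{v} \in \dot{W}^{1,2}$, hence $\|\nabla \mathbf{v}\|_{L^2(\mathbb{R}^2)} \leq C \|\nabla\omega\|_{L^1(\mathbb{R}^2)}$. For the $L^\infty$ bound on $\mathbf{v}$ itself, the critical Sobolev embedding $\dot{W}^{2, 1}(\mathbb{R}^2) \hookrightarrow L^\infty(\mathbb{R}^2)$ supplies the remaining estimate: the second-derivative gain from Bourgain--Brezis places $\mathbf{v}$ in $\dot{W}^{2,1}$, which in two dimensions embeds into $L^\infty$ at the scaling-critical endpoint.

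\medskip

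The main obstacle is genuinely the endpoint nature of the estimates: all of this is false without the divergence-free hypothesis, since for generic $L^1$ data neither the elliptic gain into $L^\infty$ (via $\dot{W}^{2,1}$) nor the gain into $\dot{W}^{1, n/(n-1)}$ holds, as $L^1$ is not a reasonable endpoint for singular integrals. Thus the crux of the argument is not any computation but the invocation of the correct Bourgain--Brezis theorem and the verification that the Biot--Savart source terms possess exactly the divergence-free structure those theorems require. Once that structural observation is in place, the rest is bookkeeping: confirming the scaling-critical Sobolev embeddings $\dot{W}^{1, 3/2}(\mathbb{R}^3) \hookrightarrow L^3$ and $\dot{W}^{2,1}(\mathbb{R}^2) \hookrightarrow L^\infty$, and tracking that all constants are independent of the time variable $t$, which is immediate since $t$ appears only as a parameter and the inequalities are applied at each fixed $t$.
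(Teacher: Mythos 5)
Your overall architecture is exactly the paper's: verify that the Biot--Savart source term is divergence-free ($\nabla\cdot(\nabla\times\boldsymbol{\omega})=0$ in three dimensions, $\nabla\cdot(\partial_{x_2}\omega,-\partial_{x_1}\omega)=0$ in two) and then invoke the Bourgain--Brezis endpoint estimates, with a Sobolev embedding to convert the gradient bound into the $L^3$ bound. However, the key lemma as you state it is false. The inequality $\|(-\Delta)^{-1}\mathbf{f}\|_{\dot W^{2,n/(n-1)}}\le C\|\mathbf{f}\|_{L^1}$ cannot hold, divergence-free hypothesis or not, because it is not scale-invariant: if $\mathbf{f}_\lambda(x)=\lambda^n\mathbf{f}(\lambda x)$ (still divergence-free, with $\|\mathbf{f}_\lambda\|_{L^1}=\|\mathbf{f}\|_{L^1}$) and $\mathbf{v}_\lambda=(-\Delta)^{-1}\mathbf{f}_\lambda$, then $\|\nabla^2\mathbf{v}_\lambda\|_{L^{n/(n-1)}}=\lambda\,\|\nabla^2\mathbf{v}\|_{L^{n/(n-1)}}\to\infty$ as $\lambda\to\infty$. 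What Bourgain--Brezis actually prove is a \emph{one}-derivative gain at critical scaling: for divergence-free $\mathbf{f}\in L^1(\mathbb{R}^n)$ one has $\|\nabla(-\Delta)^{-1}\mathbf{f}\|_{L^{n/(n-1)}}\le C\|\mathbf{f}\|_{L^1}$, and in dimension two additionally $\|(-\Delta)^{-1}\mathbf{f}\|_{L^\infty}\le C\|\mathbf{f}\|_{L^1}$. (Your intermediate step ``$\nabla\mathbf{v}\in\dot W^{1,3/2}$, hence in particular $\|\nabla\mathbf{v}\|_{L^{3/2}}\le C\|\mathbf{f}\|_{L^1}$'' is also a non sequitur: homogeneous Sobolev spaces are not nested, so membership in $\dot W^{1,3/2}$ says nothing about the $L^{3/2}$ norm of the function itself.)

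The error is most serious in part (b), where your $L^\infty$ bound rests entirely on the claim that Bourgain--Brezis places $\mathbf{v}$ in $\dot W^{2,1}(\mathbb{R}^2)$, after which the (true) embedding $\dot W^{2,1}(\mathbb{R}^2)\hookrightarrow L^\infty$ would finish. But the estimate $\|\nabla^2\mathbf{v}\|_{L^1}\le C\|\mathbf{f}\|_{L^1}$ is precisely the Calder\'on--Zygmund endpoint that fails, and the divergence-free structure does \emph{not} rescue it: Bourgain and Brezis note that $W^{2,1}$ regularity is false in their setting, and their $L^\infty$ bound in two dimensions is a genuinely deeper statement proved directly, not by factoring through second derivatives. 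Indeed, control of $\mathbf{v}$ in $\dot W^{2,1}(\mathbb{R}^2)$ would imply \emph{both} conclusions of part (b) (the $L^2$ bound on $\nabla\mathbf{v}$ via $\dot W^{1,1}(\mathbb{R}^2)\hookrightarrow L^2$), so you have in effect replaced the theorem to be cited by a strictly stronger, false one. The repair is immediate and returns you to the paper's short proof: quote the Bourgain--Brezis theorem in its correct form --- in $\mathbb{R}^2$, $\|\mathbf{v}\|_{L^\infty}+\|\nabla\mathbf{v}\|_{L^2}\le C\|\mathbf{f}\|_{L^1}$ for divergence-free $\mathbf{f}$; in $\mathbb{R}^3$, $\|\nabla\mathbf{v}\|_{L^{3/2}}\le C\|\mathbf{f}\|_{L^1}$, with $\|\mathbf{v}\|_{L^3}$ then following from $\dot W^{1,3/2}(\mathbb{R}^3)\hookrightarrow L^3$ --- after which the rest of your argument (the divergence-free verification, the identification $\|\mathbf{f}\|_{L^1}=\|\nabla\omega\|_{L^1}$ up to a constant, and the observation that $t$ enters only as a parameter) is correct and matches the paper.
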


We remark that in 2 dimensions, by the Poincar\'{e} inequality, it follows from $\|\nabla\mathbf{v}\|_{L^2(\mathbb{R}^2)}<\infty$, that
$\mathbf{v}$ lies in $VMO(\mathbb{R}^2)$, i.e. has vanishing mean oscillation.

\begin{proof}[Proof of Proposition~\ref{prop}]
 Note that
$$\nabla\cdot (\nabla\times \boldsymbol{\omega})=0.$$ Thus we
can immediately apply the result of Bourgain-Brezis
\cite{BourgainBrezis2007} (see also
\citelist{\cite{BourgainBrezis2004}\cite{CvSY}\cite{VanSchaftingen2004}}),
to the Biot-Savart formula \eqref{eq:vtoomega2} and get the desired
conclusions in part \eqref{prop_a}.

To consider the 2-dimensional flow,  note that
$(-\partial_{x_2}\omega,\partial_{x_1}\omega)$ is a vector field in
$\mathbb{R}^2$ with vanishing divergence. In view of  the
two-dimensional  Biot-Savart relation (\ref{eq:BS2d}), we can then
use the two-dimensional Bourgain--Brezis result
\cite{BourgainBrezis2007}, and we obtain \eqref{prop_b}.
\end{proof}



We note further that the proposition applies to both the Euler
(inviscid) or the Navier--Stokes (viscous) flow.


\begin{proof}[Proof of Theorem~\ref{thm:fluid}]

Now set $K_t$ for the heat kernel in 2-dimensions, i.e.
          $$K_t(x) = {\frac{1}{4\pi t}}e^{-{\frac{|x|^2}{4t}}}.$$
Rewriting \eqref{eq9}  as an integral equation for $\omega$ using
Duhamel's theorem, where $\omega_0$ is the initial vorticity, we
have,
\begin{equation}\label{eq10}
\omega(x,t)=K_t\star\omega_0(x)+\int_0^t\partial_x K_{t-s}\star[\mathbf{v} \omega(x,s)]ds
\end{equation}
where $\mathbf{v}$ is given by \eqref{eq:BS2d}.

We apply a Banach fixed point argument to the operator $T$ given by
\begin{equation}\label{eq11}
T\omega(x,t)=K_t\star\omega_0(x)+\int_0^t\partial_x K_{t-s}\star[\mathbf{v} \omega(x,s)] ds,
\end{equation}
where again $\mathbf{v}$ is given by \eqref{eq:BS2d}.
Let us set
          $$E=\Bigl\{g\ | \sup_{0<t<t_0} \|g(\cdot,t)\|_{W^{1,1}(\mathbb{R}^2)}\leq A\Bigr\}.$$
We will first show that $T$ maps $E$ into itself, for $t_0$ chosen
as in the theorem.

Differentiating \eqref{eq11} in the space variable once, we get
 $$
 (T\omega(x,t))_x=K_t\star f_0(x)+\int_0^t \partial_x K_{t-s}\star \big( {\bf
 v}_x \omega\big)ds+\int_0^t \partial_x K_{t-s}\star \big(\mathbf{v} \omega_x\big)ds.$$
Here we denote by $f_0$ the spatial derivative of the initial vorticity $\omega_0$.
Using Young's convolution inequality, we have
        $$\|(T\omega(\cdot,t))_x\|_{L^1(\mathbb{R}^2)}\leq \|f_0\|_{L^1(\mathbb{R}^2)}+C\int_0^t
        (t-s)^{-1/2} (\|\mathbf{v}_x\omega\|_{L^1(\mathbb{R}^2)}+\|\mathbf{v}\omega_x\|_{L^1(\mathbb{R}^2)})ds.$$
Now we apply Proposition~\ref{prop}\eqref{prop_b} to each of the terms on the right. For
the first term we have, by Cauchy-Schwartz,
$$
\|\mathbf{v}_x\omega\|_{L^1(\mathbb{R}^2)}\leq C\|\nabla \mathbf{v}\|_{L^2(\mathbb{R}^2)}\|\omega\|_{L^2(\mathbb{R}^2)}
$$
The Gagliardo-Nirenberg inequality applies as $\omega\in E$ and so
$\omega(\cdot, t)\in L^1(\mathbb{R}^2)$ and so,
            $$\|\omega\|_{L^2(\mathbb{R}^2)}\leq
            C\|\nabla\omega\|_{L^1(\mathbb{R}^2)},$$
and to $\|\nabla\mathbf{v}\|_{L^2(\mathbb{R}^2)}$ we apply Proposition~\ref{prop}\eqref{prop_b}.
Similarly
$$\|\mathbf{v}\omega_x\|_{L^1(\mathbb{R}^2)} \leq \|\mathbf{v}\|_{L^{\infty}(\mathbb{R}^2)} \|\omega_x\|_{L^1(\mathbb{R}^2)}.$$
Again we apply Proposition~\ref{prop}\eqref{prop_b} to $\|\mathbf{v}\|_{L^{\infty}(\mathbb{R}^2)}$. Hence in all we have,
              $$\|(T\omega)_x\|_{L^1(\mathbb{R}^2)} \leq \|f_0\|_{L^1(\mathbb{R}^2)} +C\int_0^t(t-s)^{-1/2}\|\nabla\omega\|^2_{L^1(\mathbb{R}^2)} ds.$$
Thus setting
$\|f_0\|_{L^1(\mathbb{R}^2)}=\|\omega_0\|_{\dot{W}^{1,1}(\mathbb{R}^2)}\leq
A_0$, we get for $t\leq t_0$ and since $\omega\in E$,
        $$ \|\nabla(T\omega)(\cdot,t)\|_{L^1(\mathbb{R}^2)} \leq A_0+Ct_0^{1/2} A^2.$$
Next from Young's convolution inequality it follows from
\eqref{eq11} that,
            $$\|T\omega(\cdot,t)\|_{L^1(\mathbb{R}^2)}\leq
            A_0+\int_0^t(t-s)^{-1/2}\|{\bf v}\omega(\cdot,s)\|_1ds$$
But by Proposition 2(b) again,
              $$||{\bf v}\omega||_1\leq ||{\bf
              v}||_\infty||\omega||_1\leq cA^2.$$
Thus,
          $$||T\omega(\cdot,t)||_1\leq A_0+ct^{1/2}A^2.$$

So adding the estimates for $T\omega$ and $\nabla(T\omega)$ we have,
         $$\sup_{t\leq
         t_0}||T\omega(\cdot,t)||_{W^{1,1}(\mathbb{R}^2)}\leq
         2A_0+ct_0^{1/2}A^2.$$

By choosing $A$ so that $A_0=A/8$ and $t<t_0=C/A_0^2$  we can assure
that if $\omega\in E$, then
              $$\sup_{t\leq t_0}\|(T\omega)(\cdot,t)\|_{W^{1,1}(\mathbb{R}^2)} \leq {\frac{A}{2}}.$$
Thus $T\omega\in E$, if $\omega\in E$. If we establish that $T$ is a
contraction then we are done.

Next we observe that the estimates in Proposition~\ref{prop}\eqref{prop_b} are linear estimates.
That is
      $$\|\mathbf{v}_1-\mathbf{v}_2\|_\infty+\|\nabla \mathbf{v}_1-\nabla{\bf
      v}_2\|_2\leq C\|\omega_1-\omega_2\|_{W^{1,1}(\mathbb{R}^2)}.$$
We easily can see from the computations above, that we have
          $$\sup_{t\leq t_0}\|T\omega_1-T\omega_2\|_{W^{1,1}(\mathbb{R}^2)}\leq
           CAt_0^{1/2} \sup_{t\leq t_0}\|\omega_1-\omega_2\|_{W^{1,1}(\mathbb{R}^2)}.$$
By the choice of $t_0$, it is seen that $T$ is a contraction. Thus
using the Banach fixed point theorem on $E$, we obtain our operator
$T$ has a fixed point and so the integral equation \eqref{eq10} has
a solution in $E$. The remaining part of our theorem follows easily
from  Proposition~\ref{prop}\eqref{prop_b}.
\end{proof}

We note in passing an estimate in $\mathbb{R}^3$ from Proposition \ref{prop}\eqref{prop_a}
above for the Navier--Stokes or the Euler flow:
\begin{equation}\sup_{t>0}\|{\bf
                 v}\|_{L^3(\mathbb{R}^3)}+\sup_{t>0}\|\nabla\mathbf{v}\|_{L^{3/2}(\mathbb{R}^3)}\leq
                 C\sup_{t>0}\|\nabla\times\boldsymbol{\omega}\|_{L^1(\mathbb{R}^3)}.
\end{equation}

\section{Magnetism}

We next turn to our results on magnetism. We denote by ${\bf
B}(x,t)$ and ${\bf E}(x,t)$ the magnetic and electric field vectors at $(x,t)\in\mathbb{R}^3\times
\mathbb{R}$. Let  ${\bf j}(x,t)$ denote the current density vector.
The Maxwell equations imply
\begin{align}
\nabla \cdot {\bf B} & = 0, \label{maxwell1} \\
\partial_t {\bf B} + \nabla \times {\bf E} & = 0, \label{maxwell2} \\
\partial_t {\bf E} - \nabla \times {\bf B} & = -{\bf j}. \label{maxwell3}
\end{align}
Differentiating \eqref{maxwell2} in $t$ and using \eqref{maxwell3}, together with the vector identity $\nabla \times (\nabla \times {\bf B}) = \nabla (\nabla \cdot {\bf B}) - \Delta {\bf B}$ and \eqref{maxwell1}, one obtains an inhomogeneous wave equation for ${\bf B}$:
\begin{equation}\label{eq14}
     {\bf B}_{tt}-\Delta {\bf B}=\nabla\times {\bf j}.
     \end{equation}
The right side of \eqref{eq14} satisfies the vanishing divergence
condition  $$\nabla\cdot(\nabla\times{\bf j})=0$$ for any fixed time
$t$. Thus an improved Strichartz estimate, namely Theorem 1 in \cite{ChanilloYung2012}
applies. We point out that the Bourgain-Brezis inequalities play a key
role in the proof of Theorem 1 in \cite{ChanilloYung2012}. We conclude easily:

\begin{thm} Let ${\bf B}$ satisfy \eqref{eq14} and let ${\bf B}(x,0)={\bf B}_0$,
$\partial_t{\bf B}(x,0)={\bf B_1}$ denote
the initial data at time $t=0$. Let $s,k\in \mathbb{R}$. Assume
$2\leq q\leq \infty$, $2<\tilde{q}\leq \infty$ and $2\leq r<\infty$.
Let $(q,r)$ satisfy the wave compatibility condition
$$
                    \frac{1}{q}+\frac{1}{r}\leq \frac{1}{2},
$$
and the following scale invariance condition is verified:
$$
      \frac{1}{q}+\frac{3}{r}=\frac{3}{2}-s=\frac{1}{\tilde
      {q}^\prime}+1-k
$$
Then, for $\frac{1}{\tilde{q}}+\frac{1}{\tilde{q}^\prime}=1$, we
have
 $$
      \|{\bf B}\|_{L^q_tL_x^r}+\|{\bf
      B}\|_{C_t^0\dot{H}_x^s}+||\partial_t {\bf
      B}\|_{C_t^0\dot{H}_x^{s-1}} \leq C(||{\bf B}_0\|_{\dot
      {H}^s}+\|{\bf
      B}_1\|_{\dot{H}^{s-1}}+\|(-\Delta)^{k/2}(\nabla_x{\bf
      j})\|_{L^{\tilde{q}^\prime}_tL_x^1}).
 $$
\end{thm}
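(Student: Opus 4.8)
The plan is to read the estimate as a direct consequence of the improved Strichartz inequality, Theorem 1 of \cite{ChanilloYung2012}, after isolating the structural feature that makes it applicable. First I would use Duhamel's principle to split the solution of \eqref{eq14} into a homogeneous part $\mathbf{B}_{\mathrm{hom}}$, which solves the free wave equation $\mathbf{B}_{tt}-\Delta\mathbf{B}=0$ with data $(\mathbf{B}_0,\mathbf{B}_1)$, and an inhomogeneous part
$$
\mathbf{B}_{\mathrm{inh}}(\cdot,t)=\int_0^t \frac{\sin\bigl((t-s)|\nabla|\bigr)}{|\nabla|}\,(\nabla\times\mathbf{j})(\cdot,s)\,ds,
$$
which solves \eqref{eq14} with vanishing initial data. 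Since all three norms on the left-hand side obey the triangle inequality, it suffices to control $\mathbf{B}_{\mathrm{hom}}$ and $\mathbf{B}_{\mathrm{inh}}$ separately.

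For $\mathbf{B}_{\mathrm{hom}}$ I would appeal to the classical Strichartz estimates for the free wave equation in $\mathbb{R}^3$: the admissibility condition $\frac{1}{q}+\frac{1}{r}\leq\frac12$ together with the scaling identity $\frac{1}{q}+\frac{3}{r}=\frac32-s$ is exactly what is needed to bound $\|\mathbf{B}_{\mathrm{hom}}\|_{L^q_tL^r_x}$ by $\|\mathbf{B}_0\|_{\dot{H}^s}+\|\mathbf{B}_1\|_{\dot{H}^{s-1}}$, and the conservation of energy for the free evolution supplies the $C^0_t\dot{H}^s_x$ and $C^0_t\dot{H}^{s-1}_x$ bounds for $\mathbf{B}_{\mathrm{hom}}$ and $\partial_t\mathbf{B}_{\mathrm{hom}}$. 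This step is standard and uses nothing about $\mathbf{j}$.

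The real content lies in $\mathbf{B}_{\mathrm{inh}}$, and the key observation is the one already recorded: $\nabla\cdot(\nabla\times\mathbf{j})=0$ at every fixed time. This divergence-free structure is precisely the hypothesis under which Theorem 1 of \cite{ChanilloYung2012} measures the source in the endpoint space $L^{\tilde{q}^\prime}_t L^1_x$ (after applying $(-\Delta)^{k/2}$), rather than in a genuine dual space $L^{\tilde{q}^\prime}_t L^{\tilde{r}^\prime}_x$ with $\tilde{r}^\prime>1$; it is at this spatial endpoint that the Bourgain--Brezis inequalities enter the proof of that theorem. Writing $\nabla\times\mathbf{j}$ as a first-order operator in $\mathbf{j}$, the source is of the form $(-\Delta)^{k/2}(\nabla_x\mathbf{j})$ up to harmless combinations of first derivatives, so the right-hand norm $\|(-\Delta)^{k/2}(\nabla_x\mathbf{j})\|_{L^{\tilde{q}^\prime}_tL^1_x}$ is the correct one. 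Applying the cited theorem then yields the bound for $\mathbf{B}_{\mathrm{inh}}$, and adding the two parts gives the claim.

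I expect the main obstacle to be verificational rather than analytic: one must check that the stated ranges $2\leq q\leq\infty$, $2<\tilde{q}\leq\infty$, $2\leq r<\infty$ together with the two displayed identities
$$
\frac{1}{q}+\frac{3}{r}=\frac32-s=\frac{1}{\tilde{q}^\prime}+1-k
$$
lie inside the admissible range of Theorem 1 of \cite{ChanilloYung2012}, so that its conclusion can be quoted without alteration; in particular the strict inequality $\tilde{q}>2$ reflects the exclusion of the forbidden double endpoint in the inhomogeneous theory. The first identity fixes the output at the $\dot{H}^s$ regularity level, while the equality with $\frac{1}{\tilde{q}^\prime}+1-k$ is the matching condition pinning the time integrability $\tilde{q}^\prime$ and the number $k$ of derivatives on $\nabla_x\mathbf{j}$ at the same scale. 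Once this bookkeeping is confirmed, the $C^0_t$ continuity in the energy norms follows from the strong continuity in $\dot{H}^s\times\dot{H}^{s-1}$ of both the free propagator and the Duhamel integral.
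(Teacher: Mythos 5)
Your proposal is correct and follows essentially the same route as the paper, whose entire argument is to observe that $\nabla\cdot(\nabla\times\mathbf{j})=0$ at each fixed time and then quote Theorem 1 of \cite{ChanilloYung2012} directly; the Duhamel splitting and the homogeneous Strichartz estimates you spell out are simply the internal content of that cited theorem. Your additional remark that $\|(-\Delta)^{k/2}(\nabla\times\mathbf{j})\|_{L^1_x}$ is controlled by $\|(-\Delta)^{k/2}(\nabla_x\mathbf{j})\|_{L^1_x}$ is the only bookkeeping the paper leaves implicit, and it is handled correctly.
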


The main point in the theorem above is that we have $L^1$ norm in space on the right side.

\begin{bibdiv}
\begin{biblist}

\bib{MR1308857}{article}{
   author={Ben-Artzi, Matania},
   title={Global solutions of two-dimensional Navier-Stokes and Euler
   equations},
   journal={Arch. Rational Mech. Anal.},
   volume={128},
   date={1994},
   number={4},
   pages={329--358},
}

\bib{BourgainBrezis2004}{article}{
   author={Bourgain, Jean},
   author={Brezis, Ha{\"{\i}}m},
   title={New estimates for the Laplacian, the div-curl, and related Hodge
   systems},
   journal={C. R. Math. Acad. Sci. Paris},
   volume={338},
   date={2004},
   number={7},
   pages={539--543},
   issn={1631-073X},
}
\bib{BourgainBrezis2007}{article}{
   author={Bourgain, Jean},
   author={Brezis, Ha{\"{\i}}m},
   title={New estimates for elliptic equations and Hodge type systems},
   journal={J. Eur. Math. Soc. (JEMS)},
   volume={9},
   date={2007},
   number={2},
   pages={277--315},
}

\bib{MR1308858}{article}{
   author={Brezis, Ha{\"{\i}}m},
   title={Remarks on the preceding paper by M. Ben-Artzi: ``Global solutions
   of two-dimensional Navier-Stokes and Euler equations''},
   journal={Arch. Rational Mech. Anal.},
   volume={128},
   date={1994},
   number={4},
   pages={359--360},
}

\bib{CvSY}{article}{
   author={Chanillo, Sagun},
   author={Van Schaftingen, Jean},
   author={Yung, Po-Lam},
   title={Variations on a proof of a borderline Bourgain--Brezis Sobolev
embedding theorem},
   journal={Submitted July 2015},
}

\bib{ChanilloYung2012}{article}{
   author={Chanillo, Sagun},
   author={Yung, Po-Lam},
   title={An improved Strichartz estimate for systems with divergence free
   data},
   journal={Comm. Partial Differential Equations},
   volume={37},
   date={2012},
   number={2},
   pages={225--233},
}

\bib{GigaMiyakawaOsada1988}{article}{
   author={Giga, Yoshikazu},
   author={Miyakawa, Tetsuro},
   author={Osada, Hirofumi},
   title={Two-dimensional Navier--Stokes flow with measures as initial
   vorticity},
   journal={Arch. Rational Mech. Anal.},
   volume={104},
   date={1988},
   number={3},
   pages={223--250},
}

\bib{Kato1994}{article}{
   author={Kato, Tosio},
   title={The Navier--Stokes equation for an incompressible fluid in ${\bf
   R}^2$ with a measure as the initial vorticity},
   journal={Differential Integral Equations},
   volume={7},
   date={1994},
   number={3-4},
   pages={949--966},
}

\bib{Oseen}{article}{
    author={Oseen, C. W.},
    title={\"Uber Wirbelbewegung in einer reibenden Fl\"ussigheit},
    journal={Ark. Mat. Astr. Fys.},
    volume={7},
    date={1912},
    pages={1--13},
}

\bib{VanSchaftingen2004}{article}{
   author={Van Schaftingen, Jean},
   title={Estimates for $L^1$-vector fields},
   journal={C. R. Math. Acad. Sci. Paris},
   volume={339},
   date={2004},
   number={3},
   pages={181--186},
   issn={1631-073X},
}

\end{biblist}
\end{bibdiv}

\end{document}